\def\C{\mathbb{C}}
\def\P{\mathbb{P}}
\def\B{\mathbb{B}}
{\large }
\def\hfl#1#2{\smash{\mathop{\hbox to 12 mm{\rightarrowfill}}
\limits^{\scriptstyle #1}_{\scriptstyle #2}}}
\def\hflp#1#2{\smash{\mathop{\hbox to 8 mm{\rightarrowfill}}
\limits^{\scriptstyle #1}_{\scriptstyle #2}}}
\newtheorem{theorem}{Theorem}[section]
\newtheorem{definition}[theorem]{Definition}
\newtheorem{corollary}[theorem]{Corollary}
\newtheorem{proposition}[theorem]{Proposition}
\newtheorem{lemma}[theorem]{Lemma}
\newtheorem{remark}[theorem]{Remark}
\newtheorem{example}[theorem]{Example}
\title{Equisingularity in one parameter families of generically reduced curves}
\author{J. Fern\'andez de Bobadilla\footnote{Instituto de Ciencias Matem\'aticas, Madrid, Spain, e-mail: javierbobadilla73@gmail.com} , J. Snoussi\footnote{Corresponding author, Universidad Nacional Aut\'onoma de M\'exico, Instituto de Matem\'aticas, Unidad Cuernavaca, Av. Universidad s/n, Lomas de Chamilpa, 62210, Cuernavaca, Morelos, Mexico, e-mail: jsnoussi@im.unam.mx}   and M. Spivakovsky\footnote{CNRS- Institut de Math\'ematiques de Toulouse and ITMO University, St Petersburg, Russia, e-mail: mark.spivakovsky@math.univ-toulouse.fr} }
\begin{document}

\maketitle

\begin{abstract}
We explore some equisingularity criteria in one parameter families of generically reduced curves. We prove the equivalence between Whitney regularity and Zariski's discriminant criterion. We prove that topological triviality implies smoothness of the normalized surface. Examples are given to show that Witney regularity and equisaturation are not stable under the blow-up of the singular locus nor under the Nash modification.
\end{abstract}

\section{Introduction}
Equisingularity has been first studied for hypersurfaces by O. Zariski. Different concepts can be used to describe the idea of all the fibers in a family of varieties having similar singularities. Many of these concepts consist in measuring some numerical invariant and requiring its constancy along the subspace. However, apart from the case of families of reduced plane curves, where almost all the natural concepts of equisingularity coincide, the general situation is quite ambiguous and unclear.

Even in the case of one parameter family of non planar reduced curves, the classical concepts split into different levels of strength. However, in this case the situation is more or less clear and understood. We refer to \cite{MGranger-equisingularite} for a description of the case of families of reduced curves. We also recommend the expository paper by J. Lipman
\cite{JLipman-equisingularity-obergurgl}, for a general overview of equisingularity problems.

In this work we focus on the case of families of generically reduced curves. These are families where the special fiber may have an embedded component. This happens always when the surface described by the family of curves is not Cohen-Macaulay. When one works with families of curves that are not complete intersections, it is natural to deal with these kind of situations. The examples we give in this work show how natural it is to meet such non Cohen-Macaulay surfaces.

When the family of curves is no longer reduced, many of the ingredients of the classical proofs, and even some definitions, fail. The Milnor number, somehow the star invariant goes to infinity.

Our main purpose in this work consists in comparing some of those equisingularity criteria that can be expressed without requiring the fibers to be reduced. More precisely we consider the Whitney regularity, Zariski's discriminant criterion and topological triviality. We state criteria that are close to normalization in a family and equisaturation.

Our main results are the equivalence between Zariski's discriminant criterion and Whitney regularity together with the fact that topological triviality implies smoothness of the normalized surface. Both results were already known in the case of reduced curves. Specialists expected them to hold also in our case, but no written proof of it is known to the authors. As a tool in one of the proofs, we establish a natural lemma which can be viewed as a partial analogue of the Rolle theorem in the one dimensional complex case.

All along this work we give examples. Some of them illustrate the concepts which appear in the text and others are counterexamples to seemingly natural conjectures. The reader will find an example of a surface Whitney regular along its singular locus that is not Cohen Macaulay and examples that illustrate how some equisingularity criteria are stronger than others.

We also explore the conservation of Whitney regularity and Zariski's discriminant criterion after two types of modifications: the blow up of the singular locus and the Nash modification. We give examples showing that neither of these properties is stable under these modifications.

\section{Equisingularity on surfaces}
Let $(S,0)$ be a germ of a reduced and irreducible complex surface with one-dimensional smooth singular locus.
A generic projection $\pi : (S,0) \rightarrow (\C,0)$ exhibits the surface $S$ as a one-parameter flat
deformation of the curve $X_0 : = \pi^{-1}(0)$.

For a generic projection $\pi$ the curve $X_0$ is reduced at its generic point. Hence, when $S$ is Cohen-Macaulay at the origin, such a projection makes $S$ into a flat deformation of a reduced curve.

In the paper \cite{MGranger-equisingularite}, the authors give a full description of different types of
equisingularity conditions and explain the relations between them, especially in the case of deformations of
reduced curves. Many of these criteria are given in terms of invariants such as Milnor number,
multiplicity, Milnor number of a generic planar projection, dimensions of various ``tangent cones'', and also
Whitney regularity conditions, equisaturation, Zariski discriminant criterion, and simultaneous resolution.

Since we do not want to restrict to the Cohen-Macaulay case, we will not have deformations of reduced curves, therefore we will not be able to use the Milnor number invariant. We will mainly focus on two equisingularity criteria:
Whitney regularity conditions and Zariski's discriminant criterion. However we will also briefly consider topological triviality, normalization in a family and equisaturation or strong equisingularity.

\begin{definition}\label{whitney}
Let $(S,0)$ be a germ of a complex surface with one dimensional smooth singular locus ${\mathcal C}$.
We will say that $(S,0)$ is \textbf{Whitney regular} if a small representative $(S\setminus {\mathcal C}, {\mathcal C})$ satisfies Whitney conditions $(a)$ and $(b)$ at 0, {\it that is}:

For any sequences of points $(x_n)\subset S\setminus {\mathcal C}$ and $(y_n)\subset {\mathcal C}\setminus
\{0\}$ both converging to $0$ and such that the sequence of lines $(x_ny_n)$ converges to a line $l$ and the sequence of directions of tangent spaces, $T_{x_n}S$, to $S$ at $x_n$,
converges to a linear space $T$ we have:

$a)$ the direction of the tangent space $T_0{\mathcal C}$ to ${\mathcal C}$ at the origin is such that
$T_0{\mathcal C}\subset T$

and

$b)$ $l\subset T$.
\end{definition}

It is not hard to prove that condition $(b)$ implies condition $(a)$, see for example \cite{DTrotman-ENS}.

\begin{definition}
Let $(S,0)$ be a germ of a complex surface and $\pi : S \rightarrow \C^2$  a finite map.
The critical locus of $\pi$ is the set of points in $S$ where the map $\pi$ does not
induce a local isomorphism. The discriminant locus of $\pi$ is the set-theoretic image of the critical locus of $\pi$.
\end{definition}

Notice that the singular locus of a surface $S$ is always a subset of the critical
locus of any finite map to $\C^2$.

Zariski's discriminant criterion in dimension two can be stated as follows:

\begin{definition}
Let $(S,0)$ be a germ of a complex surface with one-dimensional smooth singular locus ${\mathcal C}$.
We will say that $(S,0)$ satisfies \textbf{Zariski's discriminant criterion} if any finite map induced by a generic
linear projection from the ambient space to $\C^2$ has smooth discriminant locus.
\end{definition}

The discriminant of a map can be endowed with a scheme structure using Fitting ideals as
in \cite{BTeissier-hunting}. Zariski's discriminant criterion can then be stated in terms
of constancy of the multiplicity of the discriminant.
In \cite[Theorem III. 5]{MGranger-equisingularite} the authors prove that when $(S,0)$ as above is a complete
intersection then Zariski's discriminant criterion is equivalent to Whitney regularity.
Their proof is based on the fact that in this case, the discriminant space is
a hypersurface of $\C^2$ and they apply the L\^e-Greuel formula, relating the Milnor number of a curve
and the multiplicity of the discriminant.

When the surface is not a complete intersection, the discriminant space (with scheme structure)
may have an embedded component. However, if we define the divisorial discriminant to be the closure
of the discriminant space without the origin ({\it i.e.} we ignore the embedded component)
then we still can use L\^e-Greuel formula whenever we have a deformation of a reduced curve
as in \cite[Definition 3.2]{JSnoussi-CMH} and \cite[3.4]{RBondil-DTLe-Trends}.
Therefore, when $(S,0)$ is a Cohen-Macaulay surface, we can still prove the
equivalence between Whitney regularity and Zariski's discriminant criterion
using the same technique as in \cite[Theorem III. 5]{MGranger-equisingularite}.

Before we continue, we would like to give an example showing that a surface as above which is
Whitney regular need not be Cohen Macaulay.
This fact is known to specialists, however we do believe it is useful to make it explicit.

\begin{example}\label{whitneynocm}
\end{example}
Let $(S,0)$ be the surface parametrized by the map:
$$\rho: (a,t) \mapsto (a, t^3, t^4, at^5);$$
it is a family of space curves degenerating to a planar cusp.

The singular set of the surface $(S,0)$ is the curve $(\C\times \{(0,0,0)\}, 0)$.

Consider the linear projection
$$r: \C^4 \rightarrow \C \times \{(0,0,0)\}.$$
In order to prove the condition $(b)$, it is enough to prove the condition $(a)$ and that,
for every sequence of points $(p_n)$ in the non-singular locus of $S$ converging
to the origin, the sequence of lines generated by $p_n$ and $r(p_n)$ converges
to a line contained in the limit $T$ of tangent spaces $T_{p_n}S$.

The Jacobian matrix of $\rho$ is
$$\left(
\begin{array}{cccc}
1 & 0 & 0 & t^5 \\
0 & 3t^2 & 4t^3 & 5at^4\\
\end{array}
\right)
$$

At each non-singular point $p_n$ of $S$ the tangent plane is spanned by the rows of
the Jacobian matrix at $p_n$. At the limit all these planes contain the vector
$(1,0,0,0))$, which spans the tangent line to the singular locus at $0$.
Hence conditon $(a)$ is verified.

Let $l_n$ be the line containing the points $p_n= \rho(a_n,t_n)$ and $r(p_n)$.
The direction of $l_n$ is given by the vector
$(0, t_n^3, t_n^4, a_nt_n^5)$.

After dividing through by $t_n^3$, we see that all these lines, independently of
the choice of $a_n$ and $t_n$ converge to the line $l$ generated by the vector $(0,1,0,0)$.

On the other hand, the tangent space $T_{p_n}$ is spanned by the vectors $(1, 0, 0, t_n^5)$
and $(0, 3t_n^2, 4t_n^3, 5a_nt_n^4)$. Dividing the second vector by $t_n^2$,
we see that all these tangent spaces, independently of the choice of $a_n$ and $t_n$ converge
to the linear space spanned by the vectors $(1,0,0,0)$ and $(0,1,0,0)$ which contains the line $l$.

Thus the  surface $S$ is Whitney regular in the sense of definition \ref{whitney}.

Let us now compute equations of the surface $S$.

If we use the local co-ordinates $(x,y,z,w)$ in the ambient space, the surface $S$
can be defined by the polynomials
$$y^4 - z^3, yw - xz^2, zw - xy^3, x^3y^5 - w^3, x^2y^2z - w^2.$$

It is a reduced surface. The hyperplane section $S \cap (x=0)$ has an embedded component at the origin.
Thus the surface $(S,0)$ is not Cohen-Macaulay at the origin.


\section{Whitney regularity is equivalent to Zariski's discriminant criterion in dimension two}

We will prove now that Whitney regularity conditions are equivalent to Zariski's discriminant
criterion for surfaces. This equivalence has been proved for complete intersections, still in dimension two,
by Brian\c{c}on, Galligo and Granger in \cite[Theorem III. 5]{MGranger-equisingularite}.

\begin{theorem}\label{equisingularidad}
 Let $(S,0)$ be a germ of reduced and irreducible complex surface with a non-singular one-dimensional singular locus ${\mathcal C}$.
The surface is Whitney regular if and only if it satisfies
Zariski's discriminant criterion.
\end{theorem}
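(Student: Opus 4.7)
The strategy is to reduce both implications to statements about the polar curve of a generic linear projection. Fix a generic linear projection $\pi\colon (S,0)\to(\C^2,0)$ and choose coordinates $(t,u)$ on the target so that $\pi({\mathcal C})$ is the $t$-axis. Let $P$ denote the polar curve of $\pi$, i.e.\ the closure in $S$ of the critical set of $\pi|_{S\setminus{\mathcal C}}$. The set-theoretic discriminant is $\pi({\mathcal C})\cup\pi(P)$, and its scheme structure (via Fitting ideals) may carry an embedded component at $0$ exactly when $\pi$ fails to be flat, i.e.\ in the non-Cohen--Macaulay case. Smoothness of the scheme-theoretic discriminant at $0$ therefore splits into two requirements: (i) $\pi(P)\subset\pi({\mathcal C})$ set-theoretically near $0$, so that the reduced discriminant is smooth, and (ii) there is no embedded component at $0$.

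For the direction Whitney regular $\Rightarrow$ Zariski's criterion, I would establish (i) by a Teissier-type equimultiplicity argument: Whitney (b) forces every limit of tangent planes along ${\mathcal C}$ to contain $T_0{\mathcal C}$, and, combined with the genericity of $\pi$, this prevents $P$ from having a branch whose image escapes $\pi({\mathcal C})$ near $0$. For (ii) the plan is to invoke the Rolle-type lemma announced in the introduction: the length of the scheme-theoretic discriminant over each $t\ne 0$ counts, with multiplicity, the critical values of the restriction $\pi|_{X_t}$, and Whitney regularity forces this count to remain constant as $t\to 0$, ruling out the extra length that an embedded component would contribute at $0$.

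For the converse Zariski $\Rightarrow$ Whitney regular, smoothness of the discriminant forces $\pi(P)\subset\pi({\mathcal C})$ set-theoretically, so $P$ maps into ${\mathcal C}$ near $0$. The absence of an embedded component then yields constancy of the scheme-theoretic multiplicity of the discriminant along $\pi({\mathcal C})$, which translates into constancy of the polar multiplicity of $S$ along ${\mathcal C}$. The classical equivalence between constancy of polar multiplicities and Whitney (b) along a smooth one-dimensional stratum then delivers Whitney regularity.

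The main obstacle, and the point at which the argument departs from the proof of Brian\c{c}on--Galligo--Granger in \cite{MGranger-equisingularite}, is the control of the embedded component of the discriminant in the non-Cohen--Macaulay situation. Their L\^e--Greuel type identity is unavailable because $X_0$ is only generically reduced and its Milnor number is infinite. The Rolle-type lemma is designed precisely to replace this identity, allowing a comparison of the length of the discriminant scheme above $t=0$ with the length above nearby $t$ without appealing to a Milnor number; I expect its proof and its compatibility with the Fitting-ideal structure of the discriminant to be the technical heart of the theorem.
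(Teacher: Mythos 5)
Your overall framework -- reducing both implications to Teissier's criterion that Whitney regularity along $\mathcal C$ is equivalent to constancy of the polar multiplicities (which in dimension two means: empty general polar curve plus equimultiplicity of $S$ along $\mathcal C$) -- is the same as the paper's, and your forward direction is essentially correct: Whitney regularity kills the polar curve, so the critical locus is $\mathcal C$ and the discriminant is the smooth curve $\pi(\mathcal C)$. Note, however, that the paper's definition of the discriminant is purely set-theoretic, so your worry about embedded components of the Fitting-ideal structure in the forward direction is not needed for the statement as given; the Rolle-type lemma is not used there.

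The genuine gap is in the converse. From smoothness of the (set-theoretic) discriminant you conclude ``$\pi(P)\subset\pi(\mathcal C)$, so $P$ maps into $\mathcal C$ near $0$'' -- but $\pi(P)\subset\pi(\mathcal C)$ does not give $P\subset\mathcal C$: the polar curve could be a nonempty curve lying over $\pi(\mathcal C)$ without being contained in $\mathcal C$, and then the general polar curve is not empty and Teissier's criterion fails. Likewise, your route to equimultiplicity of $S$ along $\mathcal C$ (``absence of an embedded component yields constancy of the scheme-theoretic multiplicity of the discriminant, which translates into constancy of the polar multiplicity'') is exactly the L\^e--Greuel-type translation that is unavailable here, as you yourself note; you do not supply a replacement. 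What is actually needed, and what the paper proves, is that $\pi^{-1}(d)$ is a \emph{single point} for every point $d$ of the discriminant near $0$. This is obtained in three steps: (1) the normalization of $(S,0)$ is smooth (because $\pi\circ n$ is finite with smooth discriminant); (2) hence the preimage $\pi^{-1}(D_d)$ of a general line $D_d$ through $d$ is connected (via the quasi-ordinary normal form $(x,y)\mapsto(x^a,y)$); (3) the Rolle/Hurwitz lemma applied to $\pi|_{\pi^{-1}(D_d)}\to D_d$, which has $d$ as its only critical value, forces $\#\pi^{-1}(d)=1$. From this single-point statement both the emptiness of the polar curve and, by conservation of degree, the equimultiplicity $m(S,c)=\deg_c\pi=\deg_0\pi=m(S,0)$ follow at once. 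Your proposal is missing this entire mechanism, and it is the technical heart of the theorem; the Rolle-type lemma belongs here, in the converse, not in the forward direction.
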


In order to prove the theorem we will need to use a criterion for Whitney regularity, established in a general setting by B. Teissier
in \cite[Theorem V.1.2]{BTeissier-rabida}.  It relates Whitney regularity of a pair of strata to the constancy of
the multiplicity of the family of polar varieties along the small stratum.

Let us first recall some definitions.

Consider a germ of analytic surface $(S,0) \subset (\C^N,0)$ and a linear projection $p: \C^N \rightarrow \C^2$.

\begin{definition}
When the restriction of the projection $p$ to a representative $S$ of $(S,0)$ is finite, the closure in $S$ of the critical locus of the restriction of $p$ to the non-singular locus of $S$ is called the polar curve associated to $p$ on $S$ at $0$.

\end{definition}

It is well known that for a generic projection $p$, the associated polar curve is either empty or
one-dimensional. Furthermore, its multiplicity at the origin does not depend on the choice of the
(generic) projection; see for example \cite[Chapter IV]{BTeissier-rabida}.

B. Teissier proved in \cite[Theorem V.1.2]{BTeissier-rabida}, that Whitney regularity is equivalent to equimultiplicity of the polar varieties along the small stratum. In dimension two, there are only two polar varieties: the polar curve and the surface itself. Since a generic projection at a given point is still generic at a nearby point, equimultiplicity of the polar curves along the singular locus of a surface implies that the general polar curve is empty.

Therefore B. Teissier's result, restricted to surfaces, can be restated as follows:

\begin{lemma}\label{teissier}
Let $S$ be a representative of a complex surface germ $(S,0)$ whose singular locus is smooth and one-dimensional.

The surface is Whitney regular if and only if the general polar curve at the origin is empty and the surface is
equimultiple along its singular locus near the origin.
\end{lemma}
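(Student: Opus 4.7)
The plan is to apply Teissier's general criterion \cite[Theorem V.1.2]{BTeissier-rabida}, which equates Whitney regularity of the pair $(S\setminus\mathcal{C},\mathcal{C})$ at $0$ with the equimultiplicity, along $\mathcal{C}$, of every polar variety of $S$. Since $\dim S = 2$, the only polar varieties of positive dimension are the surface $S$ itself and the polar curve $P_p$ associated with a generic linear projection $p\colon\C^N\to\C^2$. Equimultiplicity of the top polar variety $S$ along $\mathcal{C}$ is, by definition, equimultiplicity of $S$ along its singular locus near $0$, so it yields one of the two conditions of the lemma for free. It remains to show that equimultiplicity of the polar curve family along $\mathcal{C}$ at $0$ is equivalent to emptiness of the general polar curve at $0$.

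For the forward direction, fix a generic $p$ and consider the polar curve $P_p\subset S$. Because genericity is an open condition on the space of linear projections, the same $p$ remains generic at every $y\in\mathcal{C}$ close to $0$; hence the polar curve of $p$ at $y$ is the germ $(P_p,y)$, and the equimultiplicity condition reads $m_y(P_p)=m_0(P_p)$ for every $y\in\mathcal{C}$ near $0$. The key step is to argue that, for generic $p$, the curve $P_p$ meets $\mathcal{C}$ only at the origin on some neighborhood of $0$. This is the transversality property of generic polar curves: a fixed one-dimensional subvariety of $S$ (here $\mathcal{C}$) can be avoided by $P_p$ away from the origin by choosing $p$ in a Zariski-dense subset of the space of projections. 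Granting this, for $y\in\mathcal{C}\setminus\{0\}$ sufficiently close to $0$ one has $m_y(P_p)=0$, and equimultiplicity forces $m_0(P_p)=0$, i.e.\ $P_p$ is empty as a germ at $0$.

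The converse is immediate. If the general polar curve at $0$ is empty, then $0\notin P_p$, and by closedness of $P_p$ there is a neighborhood $U$ of $0$ with $P_p\cap U=\emptyset$; hence $m_y(P_p)=0=m_0(P_p)$ for every $y\in\mathcal{C}\cap U$, so the polar curve family is trivially equimultiple along $\mathcal{C}$ at $0$. Combined with the assumed equimultiplicity of $S$ along $\mathcal{C}$, this yields equimultiplicity of the full polar family, and Teissier's theorem produces Whitney regularity. The only non-formal ingredient of the whole argument is the transversality claim isolating $0$ as the unique possible meeting point of $P_p$ and $\mathcal{C}$, which is where I expect the main obstacle.
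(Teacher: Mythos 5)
Your argument is correct and is essentially the paper's own: the authors likewise deduce the lemma from Teissier's criterion \cite[Theorem V.1.2]{BTeissier-rabida} by observing that in dimension two the only polar varieties are $S$ itself and the polar curve, and that a projection generic at $0$ remains generic at nearby points of $\mathcal{C}$, so equimultiplicity of the polar curves along $\mathcal{C}$ forces the general polar curve to be empty. The ``transversality'' step you single out as the main obstacle is in fact automatic: the polar curve is by definition the closure of a subset of $S\setminus\mathcal{C}$, so no component of it lies in $\mathcal{C}$, and a sufficiently small representative meets $\mathcal{C}$ only at the origin.
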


\begin{proof} (of Theorem \ref{equisingularidad})

Let $S$ be a small representative of a germ of a reduced and irreducible complex surface $(S,0)$, with one-dimensional smooth singular
locus ${\mathcal C}$.

Assume that the strata $(S\setminus {\mathcal C}, {\mathcal C})$ satisfy Whitney conditions.

Let $\pi: S \rightarrow \C^2$ be a finite map induced by a general linear projection. By lemma \ref{teissier},
the polar curve associated to $\pi$ is empty, so the critical locus of $\pi$ is  the curve ${\mathcal C}$.
Since this curve is smooth and $\pi$ general,  its image, which is the discriminant locus, is again smooth.
Thus $(S,0)$ satisfies Zariski's discriminant criterion.

Conversely, let $\pi: S \rightarrow \C^2$ be a finite map induced by a general linear projection
$p : \C^N \rightarrow \C^2$, for which the discriminant locus is non-singular.

We are going to prove that the polar curve associated to $\pi$ is empty and that the multiplicity of the surface along its singular locus is constant.

The proof will follow from the six steps below:

{\it Step 1:} The normalization of the germ $(S,0)$ is non-singular. 
Indeed, the projection $\pi : S \rightarrow \C^2$ is finite and has a smooth discriminant locus. The composition map $\pi \circ n$ of 
$\pi$ with the normalization $n$ is also finite and its discriminant locus will be either smooth or empty. A normal surface singularity with smooth or empty discriminant (in $\C^2$) is non-singular (see \cite[Theorem 5.2]{Barth}).

{\it Step 2:} Let $0 \in D\subset \C^2$ be a line and $t\in D_t$ a sufficiently close parallel line to it.
We are going to show that in a sufficiently small neighborhood of the origin, the hyperplane section $\pi^{-1}(D_t)$ is connected.

\begin{lemma}\label{connected}
There exists $\epsilon_0 >0$  such that for every $0 < \epsilon \leq \epsilon_0$ the curve $\pi^{-1}(D_t)\cap \B_{\epsilon}$ is connected, for every $t$ with sufficiently small absolute value; where $\B_{\epsilon}$ is the ball of $\C^N$ centered at the origin with radius $\epsilon$.
\end{lemma}

\begin{proof}
The germ of surface $(S,0)$ is assumed to be irreducible. So its normalization is still a germ and moreover, by {\it Step 1}, it is the germ of a smooth surface. So a sufficiently small neighborhood of $n^{-1}(0)$ is isomorphic to an open neighborhood $U$ of the origin in $\C^2$. The composition map: 
$$U \hflp{\cong}{} n^{-1}(S) \hflp{n}{} S \hflp{\pi}{} \C^2$$
is a finite map with discriminant locus contained in a smooth curve of $\C^2$. 

Following \cite[Corollary 5.3]{FArocaJSnoussi-quasi-ordinary}, there exist a system of coordinates $(x,y)$ in $U$ and a natural number $a$ such that, up to isomorphism in the basis,  the composition map is of the form $(x,y) \mapsto (x^a, y)$. This is a consequence of the classification of normal quasi-ordinary singularities; see also \cite[III. 5]{Barth}.

So the inverse image of a line $D_t$ will be a curve in $U$ with an equation of the form $\alpha x^a + \beta y +c = 0$. Such a curve is connected. Its image by the normalization is still a connected curve in $S$. Hence, the inverse image by $\pi$ of $D_t$ is a connected curve in a small representative of $S$.
\end{proof}

{\it Step 3:} A Rolle--type lemma for complex curves.

\begin{lemma}\label{rolle}
Let ${\mathcal T} \subset \C^N$ be an analytic curve. Let ${\B}_{\tau , \epsilon}$ be the open ball of $\C^N$ centered at a point 
$\tau \in {\mathcal T}$ with radius $\epsilon$.
Suppose the intersection ${\mathcal T} \cap {\B}_{\tau, \epsilon}$ is connected.

Consider a linear projection $ \C^N \rightarrow \C$ that induces a finite ramified covering $\rho : {\mathcal T}\cap {\B}_{\tau, \epsilon} \rightarrow \C$, with $\rho (\tau) =0$. 

If $\rho^{-1}(0) \neq \{\tau\}$ then the map $\rho$ has a critical point different from the points in the fiber $\rho^{-1}(0)$.
\end{lemma}

\begin{proof}
This lemma will follow from a Hurwitz formula for possibly singular curves. Let us call: 

$d$ the degree of $\rho$

$n$ the cardinality of $\rho^{-1}(0)$ ; by hypothesis $n\geq 2$

$\chi$ the Euler Characteristic of ${\mathcal T} \cap {\B}_{\tau, \epsilon}$

$\chi_0$ the Euler characteristic of $\C$

Let us suppose that there are no critical points for $\rho$ in ${\B}_{\tau, \epsilon}$ outside $\rho^{-1}(0)$.

We can then chose a triangulation of a ball of $\C$, in such a way that the origin is a vertex and no critical value lies in any edge nor a face. This triangulation is lifted by $\rho$ to a triangulation where all the points of $\rho^{-1}(0)$ are vertices.

So we obtain a Hurwitz formula for this situation
$$\chi  = d \chi_0 + n - d.$$

Knowing that Euler characteristic of $\C$ is 1 we have
$$\chi = n.$$

On the other hand, Euler characteristic is the alternating sum of dimensions of homology spaces.
Since ${\mathcal T}\cap {\B}_{\tau , \epsilon}$ is connected of real dimension two and not compact, we have  
$$\chi = 1 -  h_1({\mathcal T}\cap{\B}_{\tau, \epsilon}) \leq 1,$$
which contradicts the fact that $n\ge2$.

Hence there is necessarily a critical point of $\rho$ outside $\rho^{-1}(0)$. Note that this point may be a singular point of ${\mathcal T}\cap {\B}_{\tau , \epsilon}$.
\end{proof}

{\it Step 4:} The inverse image by $\pi$ of the discriminant locus is the singular locus ${\mathcal C}$ of $S$. 

In fact, consider a point $d$ in the discriminant locus of $\pi$, close to the origin. Consider a general line $D_d\in \C^2$ containing $d$. Since the discriminant locus of $\pi$ is non singular, the intersection of $D_d$ with the discriminant is precisely the point $d$. 

The inverse image $\pi^{-1}(D_d)$ is a curve in $S$.  By Lemma \ref{connected}, it is connected. 

The restriction of $\pi$ to $\pi^{-1}(D_d)$ has all its critical values in the intersection of $D_d$ with the discriminant locus of $\pi$. It has then only one critical value, $d$. By Lemma \ref{rolle} the inverse image $\pi^{-1}(d)$ consists of one point that lies in the singular locus ${\mathcal C}$ of $S$.

{\it Step 5:} The general polar curve is empty. In fact, the critical locus of $\pi$ is in the inverse image of the discriminant locus. By {\it Step 4}, this inverse image is the singular locus of $S$.

{\it Step 6:} The multiplicity of $S$ along ${\mathcal C}$ is constant. In fact, let $c\in {\mathcal C}$ be close to the origin. Since
$\pi$ is generic, the multiplicity of $S$ at $0$ is the degree of $\pi$ at $0$; {\it i.e.} $$m(S,0) = {\rm deg}_0 \pi.$$

In the same way
$$m(S,c) = {\rm deg}_c \pi.$$

The conservation of the degree implies
$${\rm deg}_0 \pi = {\displaystyle \sum_{x\in \pi^{-1}(\pi(c))}}{\rm deg} _x \pi.$$

By {\it Step 4}, $$\pi^{-1}(\pi(c)) = c.$$

So 
$$m(S,0) = {\rm deg} _0 \pi = {\rm deg}_c \pi = m(S,c).$$

This ends the proof of \ref{equisingularidad}
\end{proof} 

\begin{remark}

1) D.T. L\^e and B. Teissier proved in \cite[Theorem 5.3.1]{DTLe-BTeissier-CESPCW2} that Whitney conditions
are equivalent to the constancy
of the Euler characteristic of plane sections of all possible dimensions along the small stratum. This appears implicitly in our proof, since the main tool was an Euler characteristic calculation.

2) In lemma \ref{connected} we used an argument on quasi-ordinary singularities to conclude on the connectedness of the fibers we consider. We could have used a much stronger result by H. Hamm and D.T. L\^e in \cite[Theorem II.1.4]{HHamm-DTLe-creil1988}.
\end{remark}

\section{Topological triviality and smoothness of the normalization}

Let us state two other equisingularity criteria valid for non Cohen-Macaulay surfaces: smoothness of the normalization and topological triviality. The first one is a weaker version of simultaneous normalization in a family. The second one states that the family of curves is homeomorphic to a product of the special fiber with the base. We will compare these criteria to the previous ones. 

\vglue .3cm

{\bf Normalization in a family}

In the case of a flat family of reduced curves, one has normalization in a family when the normalization of the surface induces a normalization on each curve of the family. In particular, it implies that the normalized surface is non-singular.

When the surface is not Cohen-Macaulay, the special fiber has an embedded component at the origin. 
So its total ring of quotients is equal to its ring of holomorphic functions and hence, there is no ``reasonable" notion of normalization.

Instead of normalization in a family we can use the weaker condition of smoothness of the normalized surface. 

From {\it Step 1} in the proof of theorem \ref{equisingularidad} we obtain the following consequence:

\begin{corollary}\label{smoothnormalization}
 Let $(S,0)$ be a germ of a reduced complex analytic surface satisfying Whitney's conditions along its smooth one-dimensional singular locus. The normalization of $(S,0)$ produces a non-singular surface.
 \end{corollary}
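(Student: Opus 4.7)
The plan is to derive this corollary essentially directly from Step 1 of the proof of Theorem \ref{equisingularidad}, using Theorem \ref{equisingularidad} itself as a bridge. First, I would reduce to the irreducible case by treating each irreducible component of $(S,0)$ separately; since normalization is performed component by component, it suffices to produce a smooth normalization for each irreducible piece, and Whitney regularity along ${\mathcal C}$ transfers to each component (whose singular locus is contained in ${\mathcal C}$).

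Assuming henceforth that $(S,0)$ is reduced and irreducible, I would invoke Theorem \ref{equisingularidad} to translate the Whitney hypothesis into Zariski's discriminant criterion: a finite map $\pi: S \to \C^2$ induced by a generic linear projection has smooth discriminant. Next, let $n: \bar{S} \to S$ be the normalization and form the composition $\pi \circ n: \bar{S} \to \C^2$. This composition is finite, and I would check that its discriminant is contained in that of $\pi$: any point of $\bar{S}$ where $\pi \circ n$ fails to be a local isomorphism either lies in the non-isomorphism locus of $n$ (which maps into the singular locus of $S$, and hence into the critical locus of $\pi$) or else sits over a critical point of $\pi$. In both situations its image falls in the discriminant of $\pi$, which is smooth by hypothesis. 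Thus $\pi \circ n$ has smooth or empty discriminant.

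Finally, $\bar{S}$ is normal by construction, so I would apply the theorem of Barth (\cite[Theorem 5.2]{Barth}), already cited in Step 1, to conclude that a normal surface singularity admitting a finite map to $\C^2$ with smooth or empty discriminant must itself be non-singular. This delivers the corollary.

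There is no real obstacle: the substantive work was already carried out in Step 1 of Theorem \ref{equisingularidad}, and the only new ingredients are the invocation of Theorem \ref{equisingularidad} to secure Zariski's discriminant criterion from Whitney regularity, and the minor observation that if $(S,0)$ is reducible one argues one component at a time. If anything deserves a second look, it is the containment of the discriminant of $\pi \circ n$ in that of $\pi$, but this is immediate from the fact that the singular locus of $S$ is contained in the critical locus of every finite projection.
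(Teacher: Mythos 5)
Your proof is correct and follows essentially the same route as the paper: the corollary is obtained by combining the forward direction of Theorem \ref{equisingularidad} (Whitney regularity gives a generic finite projection with smooth discriminant) with the argument of \emph{Step 1} of that theorem's proof, namely that the finite composition $\pi\circ n$ has smooth or empty discriminant and Barth's theorem then forces the normalization to be non-singular. Your explicit reduction to irreducible components merely makes precise a point the paper leaves implicit, since the theorem is stated for irreducible germs.
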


It is well known that the converse is not true. See Example \ref{topotrivial} below.

\vglue .3cm

{\bf Topological triviality}

Let $(S,0)$ be a germ of a reduced surface with smooth singular locus ${\mathcal C}$.  Let $r : (S,0) \rightarrow ({\mathcal C}, 0)$ be a retraction making $(S,0)$ into a family of curves. 

\begin{definition}
The family of curves $r: S \rightarrow {\mathcal C}$ is said to be topologically trivial along ${\mathcal C}$  if there exists a homeomorphism $h: S \rightarrow r^{-1}(0) \times {\mathcal C}$ such that $r =\pi \circ h$, where $\pi: r^{-1}(0) \times {\mathcal C} \rightarrow {\mathcal C}$ is the natural projection.
\end{definition}

It is well known that Whitney regularity implies topological triviality (see \cite{JMather-notes}). 

In the surface $y^2 - x^2(x+z)=0$, with projection to the $z$-axis,  the special fiber is a cusp, and the general one has two branches. It shows that one can have smoothness of normalized surface without having topological triviality. 

When we have a family of reduced curves $f: S \rightarrow D$ with a section $\sigma$ such that the fibers $f^{-1}(t)$ are all non singular outside $\sigma(t)$, R.-O. Buchweitz and G.-M. Greuel proved in \cite[Thm.5.2.2]{ROBuchweitz-GMGreuel}, that topological triviality is equivalent to the constancy of the Milnor number of the fibers and equivalent to weak simultaneous resolution. In particular, topological triviality implies smoothness of the normalization. 

As a consequence we obtain:

\begin{lemma}\label{topotriv-normal}
A normal surface singularity is a topologically trivial family of curves if and only if the surface is non singular.
\end{lemma}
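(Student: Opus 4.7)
The plan is to derive both directions from results already stated in the excerpt. Write $r : (S,0) \to ({\mathcal C},0)$ for the retraction.

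For the ``only if'' direction, suppose $(S,0)$ is a normal surface singularity and that the family $r : S \to {\mathcal C}$ is topologically trivial. The Buchweitz--Greuel theorem recalled immediately above the lemma asserts that topological triviality of such a family of reduced curves forces the normalization of $S$ to be smooth. Since $(S,0)$ is already normal, the normalization map is an isomorphism, so $(S,0)$ itself is non-singular. The hypotheses of the cited theorem are met here: flatness of $r$ follows from miracle flatness ($S$ is Cohen--Macaulay as a normal surface, and ${\mathcal C}$ is regular); the inclusion $\sigma : {\mathcal C} \hookrightarrow S$ is a section of $r$; and since $S$ is smooth away from the isolated singular point $0$, each fiber $r^{-1}(t)$ can only fail to be smooth at $\sigma(t)$.

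For the ``if'' direction, suppose $(S,0)$ is non-singular. Because $r$ fixes ${\mathcal C}$ pointwise, at every $p \in {\mathcal C}$ the differential $dr_p$ restricts to the identity on $T_p{\mathcal C}$ and is therefore surjective; by continuity $dr$ remains surjective on an open neighborhood of ${\mathcal C}$. On a sufficiently small representative $r$ then becomes a proper holomorphic submersion, and Ehresmann's fibration theorem furnishes a diffeomorphism $h : S \to r^{-1}(0) \times {\mathcal C}$ with $r = \pi \circ h$, which in particular is a topological trivialization.

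The step I expect to be most delicate is the invocation of Buchweitz--Greuel, whose statement is tailored to flat families of reduced curves with a section outside of which the fibers are smooth. Setting up a retraction on a normal surface so that this framework applies---verifying flatness via the Cohen--Macaulay property, placing the potential singularities of the fibers along a section using the isolatedness of the singularity of $S$, and confirming fiber reducedness---is the main technical input. Once this has been checked, the lemma reduces to a single appeal to the cited theorem together with a standard submersion argument for the converse.
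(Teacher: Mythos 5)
Your overall strategy is the same as the paper's: the ``only if'' direction is a direct appeal to the Buchweitz--Greuel theorem (topological triviality of a family of reduced curve singularities forces smoothness of the normalization, and a normal surface equals its normalization), and the ``if'' direction is the standard fact that a retraction of a smooth surface onto a smooth curve is a submersion near the origin, hence a (locally trivial, in fact analytically trivial) product. The paper states the lemma as an immediate consequence of Buchweitz--Greuel and only sketches, in a later remark, an alternative route via Mumford's criterion ($S\cong r^{-1}(0)\times{\mathcal C}$ forces the link of $S$ to be a sphere); so you are filling in details the paper omits rather than taking a different road.

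There is, however, a genuine flaw in the step you yourself flag as the delicate one. You justify the Buchweitz--Greuel hypothesis ``$r^{-1}(t)$ is non-singular outside $\sigma(t)$'' by saying that $S$ is smooth away from its isolated singular point. That implication is false: a fiber of $r$ can be singular at a point $p$ where $S$ is smooth, namely at any critical point of $r|_{S\setminus\{0\}}$, i.e.\ any $p$ with $dr_p=0$ on $T_pS$. For a singular normal surface such critical points can accumulate at the origin, so they are not removed merely by shrinking the representative, and they need not lie on the section. The statement can be repaired, but it requires an argument you do not give: the closure of the critical locus of $r|_{S\setminus\{0\}}$ is analytic, and any one-dimensional component of it through $0$ is mapped by $r$ to a point, hence lies in the central fiber $r^{-1}(0)$ and forces $r^{-1}(0)$ to be non-reduced along it --- which is excluded once one has checked (using that $S$ is Cohen--Macaulay and the fibers are generically reduced) that the fibers are reduced. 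After discarding that possibility and shrinking the representative, the fibers $r^{-1}(t)$, $t\neq 0$, are smooth and $r^{-1}(0)$ is singular only at the origin. Alternatively, one may take $r$ to be induced by a generic linear projection, for which the relevant critical (polar) locus is zero-dimensional near $0$. Without one of these arguments the verification of the hypotheses of the cited theorem does not go through as written; the reduction of the lemma to Buchweitz--Greuel and the converse via the submersion/product argument are otherwise fine.
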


We are now going to prove that topological triviality implies smoothness of the normalization, even when the surface is not
Cohen-Macaulay.

\begin{theorem}\label{topotri-smoothness}
Let $(S,0)$ be a germ of a reduced surface with a smooth one dimensional singular locus ${\mathcal C}$.
Let $r: S \rightarrow {\mathcal C}$ be a topologically trivial family of curves. Then the normalization of the surface $(S,0)$ is
non-singular. 
\end{theorem}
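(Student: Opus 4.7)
The plan is to reduce the theorem to Lemma~\ref{topotriv-normal} by transferring the topological trivialization to the normalization. Let $n:\tilde S\to S$ be the normalization and set $\tilde r:=r\circ n$; since $\tilde S$ is normal (hence Cohen--Macaulay) and $\mathcal C$ is smooth of dimension one, $\tilde r$ is a flat family of curves. I will show that at each point $\tilde p\in n^{-1}(0)$ the germ $(\tilde S,\tilde p)$ together with the map $\tilde r$ is itself a topologically trivial family of curves; Lemma~\ref{topotriv-normal} will then force $(\tilde S,\tilde p)$ to be non-singular, which is exactly the assertion of the theorem.

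Decompose $(S,0)$ into its local analytic branches $(S^{(1)},0),\dots,(S^{(k)},0)$, corresponding bijectively to the points $\tilde p_i$ of $n^{-1}(0)$, so that the normalization of $(S,0)$ is the disjoint union of the normalizations $(\widetilde{S^{(i)}},\tilde p_i)$. Set $X_0:=r^{-1}(0)$. The trivialization $h:S\to X_0\times\mathcal C$ is a homeomorphism of underlying topological spaces, which may be viewed as a homeomorphism $S\to X_0^{\mathrm{red}}\times\mathcal C$. For a reduced two-dimensional complex analytic germ the singular locus coincides with the non-manifold locus, and the local analytic branches coincide with the connected components of a small punctured neighbourhood of this locus; both structures are therefore topologically intrinsic. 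It follows that $h$ induces a bijection between the branches $S^{(i)}$ of $S$ at $0$ and the branches of $X_0^{\mathrm{red}}\times\mathcal C$ at $h(0)$, the latter having the form $X_0^{(j)}\times\mathcal C$ where the $X_0^{(j)}$ are the one-dimensional irreducible components of $X_0^{\mathrm{red}}$ through $h(0)$, and $h$ restricts to a homeomorphism $h_i:S^{(i)}\to X_0^{(j_i)}\times\mathcal C$ commuting with the projections to $\mathcal C$.

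The core step, and the main obstacle, is the topological lift of each $h_i$ through the normalization: one must show that the normalization of a reduced two-dimensional complex analytic germ is a topological invariant, so that $h_i$ lifts to a homeomorphism $\widetilde{S^{(i)}}\to\widetilde{X_0^{(j_i)}\times\mathcal C}$ over $\mathcal C$. This rests on the topological characterization of branches just recalled: matching branches point-by-point on both sides produces the desired lift. Since $\mathcal C$ is smooth, $\widetilde{X_0^{(j_i)}\times\mathcal C}=\widetilde{X_0^{(j_i)}}\times\mathcal C$, and since $X_0^{(j_i)}$ is a reduced analytic curve its normalization $\widetilde{X_0^{(j_i)}}$ is smooth. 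Thus $(\widetilde{S^{(i)}},\tilde p_i)$ is homeomorphic over $\mathcal C$ to the product of a smooth curve with $\mathcal C$, which is manifestly a topologically trivial family of curves. Applying Lemma~\ref{topotriv-normal} at each $\tilde p_i$ now yields the non-singularity of the normalization of $(S,0)$.
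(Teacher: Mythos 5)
Your overall strategy --- transfer the trivialization to the normalization and invoke Lemma~\ref{topotriv-normal} --- is the same as the paper's, but the route you take to transfer it has a genuine gap at exactly the point you yourself flag as ``the main obstacle''. The claim that the trivializing homeomorphism lifts through the normalization (equivalently, that the underlying topological space of the normalization, together with the normalization map, is a topological invariant of a reduced surface germ) is asserted, not proved. The justification you offer rests on the statement that ``the singular locus coincides with the non-manifold locus'', and this is false for surfaces: the cuspidal edge $\{y^2=x^3\}\times\C\subset\C^3$ is singular along a line but is a topological manifold (it is homeomorphic to $\C^2$), so its non-manifold locus is empty. Consequently the proposed ``topological characterization of branches'', and with it the point-by-point matching of branches that is supposed to produce the lift, does not stand as written. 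Even granting that the number of local branches is a topological invariant (which is true, e.g.\ via local homology in top degree), turning a homeomorphism of germs into a homeomorphism of normalizations requires matching branches at \emph{every} nearby point in a way that is continuous for the topology of the normalized spaces; none of this is supplied, and it is the heart of the matter.

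The paper avoids the lifting problem altogether. After reducing to an analytically irreducible component, it uses the topological triviality to conclude that every fiber $r^{-1}(t)$ is irreducible and hence that $S$ is analytically irreducible at every point of $\mathcal C$; therefore $n^{-1}(x)$ is a single point for every $x$, the finite bijective map $n$ is a homeomorphism, and $\rho=r\circ n$ is literally a topologically trivial family of curves --- reduced ones, since $\bar S$ is normal and hence Cohen--Macaulay --- to which Lemma~\ref{topotriv-normal} applies directly. If you want to rescue your argument, either prove the topological invariance of the normalization for surface germs (a nontrivial task), or, better, use the triviality hypothesis as the paper does to show that the normalization map is itself a homeomorphism, so that no lift is needed.
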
 

\begin{proof}
It is enough to prove the statement for each analytically irreducible component of the surface $(S,0)$. So we may assume the surface is reduced and irreducible.

Let $r: S \rightarrow {\mathcal C}$ be the retraction making $S$ into a topologically trivial family of curves. The triviality, together with the irreducibility of $S$ at the origin, imply that the fibers $r^{-1}(t)$ are irreducible and generically reduced for all $t$. The surface $S$ is then analytically irreducible at every point.

Call $n: {\bar S} \rightarrow S$ the normalization of $(S,0)$. The inverse image by $n$ of any point of ${\mathcal C}$ is a single point. Then the normalization $n$ induces a homeomorphism between ${\bar S}$ and $S$. Call $\rho = r\circ n$ the composition map. The projection $\rho : {\bar S} \rightarrow {\mathcal C}$ is a topologically trivial family of curves. 

Since the surface ${\bar S}$ is normal, and hence Cohen-Macaulay, the family of curves $\rho : {\bar S} \rightarrow {\mathcal C}$ is a topologically trivial family of reduced curves. So we can apply lemma \ref{topotriv-normal}. The surface ${\bar S}$ is then non singular.

\end{proof}

\begin{remark}

1) Since Whitney conditions imply topological triviality, Theorem \ref{topotri-smoothness} is a stronger statement than Corollary \ref{smoothnormalization}. In fact, It is well known that Whitney regularity is not equivalent to topological triviality; \cite[Chap. V]{MGranger-equisingularite}.

2) In order to prove lemma \ref{topotriv-normal} we can use Mumford's criterion for smoothness of normal surfaces. Indeed, if $(S,0)$ is a normal surface with a topologically trivial family of reduced curves along a non-singular curve, then a suitable representative $S$ of $(S,0)$ is a product of two discs. Its link will be homeomorphic to a sphere.

\end{remark} 

Let us now give an example of a topologically trivial family of curves for which the surface is not Whitney regular. We do believe it is worth to give such an example, where the special curve is not reduced, and therefore we do not use the constancy of Milnor number. 

\begin{example}\label{topotrivial}
\end{example}
Consider the parametrized surface $S\subset \C^4$ given by
$$n: (a,t) \mapsto (a,t^3, t^5, at^2)$$
and consider local coordinates $x$, $y$, $z$ and $w$ in $\C^4$.

The singular locus of $S$ is the $x$-axis.
One can easily check that the map $n$ is bijective and is an isomorphism outside the $x$-axis. So it is the normalization of $S$. 
The family of curves $(t^3, t^5, at^2)$ parametrized by $a$ is a topologically trivial family.
In fact the map 
$$((0,t^3,t^5),a) \mapsto (a, t^3, t^5, at^5/t^3), {\rm whenever} \, t \neq 0$$
and $$((0,0,0),a) \mapsto (a,0,0,0)$$
is a homeomorphism.

However the surface $S$ does not satisfy Whitney conditions along its singular locus.

In fact, consider the sequence of points $p_k = n(1/k, 1/k)$ and $q_k = (1/k , 0, 0, 0)$.
The line $(p_kq_k)$ is spanned by the vector $(0, 1, 1/k^2, 1)$. so the sequence of lines $(p_kq_k)_k$ converges to the line spanned by $(0, 1, 0, 1)$. 

On the other hand, the tangent space $T_{p_k}S$ is spanned by the vectors $(1, 0, 0, 1/k^2)$ and $(0, 3, 5/k^2, 2)$. The limit is the plane spanned by $(1, 0, 0, 0)$ and $(0, 3, 0, 2)$ which does not contain the vector $(0, 1, 0, 1)$.

\section{Equisaturation}

O. Zariski in \cite{OZariski-equising3}, and F. Pham together with B. Teissier in \cite{FPham-BTeissier-fractions}, introduced concepts of saturation of reduced local analytic algebras. In both cases these saturated algebras are intermediate rings between the ring of holomorphic functions and its integral closure. 

Since in this paper we are interested in the case of curves which may not be reduced, we cannot use this definition of saturation. 

Following \cite{MGranger-equisingularite, FPham-BTeissier-fractions, Stutz-equising}, one knows that a flat deformation of a reduced plane curve is equisaturated if and only if the resulting surface is Whitney regular and the generic plane projection of the fibers has a fixed topological type. 

We can use the latter condition as an equisingularity criterion instead of equisaturation.

More precisely, let $(S,0)$ be a germ of a complex surface singularity with a one-dimensional smooth singular locus. Via a projection to
$\C$ we can view this surface as a one-parameter deformation of a curve. We say that $(S,0)$ is strongly equisingular along its singular locus near the origin if it is Whitney regular and the topological type of a generic planar projection of the curves is constant; see \cite{MGranger-equisingularite}.

\begin{example}\label{whitneynosaturado}
\end{example}
Consider the surface given by the parametrization:
$$(a,t) \mapsto (a, t^4, at^6, t^7)$$

It is a non-Cohen-Macaulay surface with one-dimensional smooth singular locus.
One can see that the limit of tangent spaces at the origin is unique; it is given by the linear space spanned by the vectors $(1,0,0,0)$ and $(0,1,0,0)$. For sequences of points $p_n =(a_n, 0, 0, 0)\in {\rm Sing}(S)$ and $q_n = (a_n, t_n^4, a_nt_n^6, t_n^7)\in S\setminus {\rm Sing}(S)$, the corresponding secants $l_n= (p_nq_n)$ converge to the line $l$ spanned by $(0,1, 0, 0)$. So the limit of tangent spaces contain the tangent line to the singular locus and the limit $l$. The surface is then Whitney regular.

However a generic plane projection will have the following parametrization
$$x = t^4, \ y= at^6 + t^7$$

The characteristic exponents change when $a$ takes the value $0$. 
This surface is not strongly equisingular.

\section{Equisingularity criteria are not stable under modifications}

One of our early motivations in this work was to investigate equisingularity criteria possibly stable under Nash modification in dimension two.

We obtained two negative examples, showing that neither Whitney regularity nor strong equisingularity is stable under Nash modification or the blow-up of the singular locus; {\it i.e.} the surface obtained by the modification does not satisfy the equsingularity criterion satisfied by the original surface.

Let us first recall the definition of Nash modification and establish some properties in case of equisingular surfaces.

Let $(X,0) \subset (\C^N,0)$ be a germ of reduced equidimensional analytic space of dimension $d$.
Call ${\bf G}(d,N)$ the Grassmannian of $d$-dimensional linear subspaces of $\C^N$.
Let us denote by $\gamma : X \setminus {\rm Sing}(X) \rightarrow {\bf G}(d,N)$ the Gauss map that associates to any non-singular point
$x$ in a representative of $(X,0)$ the direction of the tangent space $T_xX$. 

Call ${\tilde X}$ the closure of the graph of $\gamma$ in $X \times {\bf G}(d,N)$.
The induced map
$$\nu : {\tilde X} \rightarrow X$$
is, by definition, the Nash modification. For its properties, one can read \cite[Chap. II]{BTeissier-rabida}, \cite{Nobile-nash} and for its desingularization properties see \cite{MSpivakovsky-sandwich, GGonzalez-Sprinberg-doubles}, and also \cite{PGonzalez-BTeissier-nash-toric,  DDuarte-nash-toric-preprint}.

Recall that if a subspace $Y\subset X$ is defined by an ideal $I = (f_1, \cdots, f_r)$, then the blow-up of $X$ along $Y$ is obtained as the closure of the graph of the map
$$\begin{array}{rcl}
X \setminus Y & \rightarrow & \P^{r-1}\\
x & \mapsto & (f_1(x) : \cdots : f_r(x)) 
\end{array}
$$

Assume that we have a germ of reduced and irreducible analytic surface $(S,0)$ with one-dimensional smooth singular locus. Furthermore, assume it is Whitney regular in the sense of definition \ref{whitney}.

\begin{proposition}
A surface $(S,0)$ as above admits a bijective parametrization $n: (\C^2, 0) \rightarrow (S,0)$ that factors through the Nash modification and the blow-up of the singular locus. 
\end{proposition}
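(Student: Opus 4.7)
My plan is to combine the structural results from the proofs of Theorem \ref{equisingularidad} and Lemma \ref{connected} with standard lifting arguments for the two modifications. By Theorem \ref{equisingularidad}, Whitney regularity yields Zariski's discriminant criterion, so the arguments of that proof apply: Step 1 gives that the normalization $n : \bar S \to (S,0)$ has smooth source $\bar S \cong (\C^2,0)$, and the quasi-ordinary reduction of Lemma \ref{connected} furnishes local coordinates in which a suitable generic projection satisfies $\pi \circ n(x,y) = (x^a, y)$, with $a = m(S,0)$. Step 4 of the same proof forces the restriction of $n$ to $\{x=0\}$ to be a bijection onto $\mathcal{C}$, and since $n$ is already an isomorphism off $\mathcal{C}$, this produces the required bijective parametrization.

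For the factorization through the Nash modification $\nu : \tilde S \to S$, I would lift $n$ by extending the pullback of the Gauss map. At points $p$ with $n(p)$ smooth in $S$, the tangent plane $T_{n(p)} S$ has Pl\"ucker coordinates equal to the $2 \times 2$ minors of the Jacobian $J_n(p)$; these are analytic on $\C^2$, and since $\mathcal{O}_{\C^2,0}$ is a UFD, dividing them by their greatest common divisor produces simultaneously non-vanishing Pl\"ucker coordinates defining an analytic map $\tilde\gamma : \C^2 \to \mathbf{G}(2,N)$. The Pl\"ucker relations, satisfied on the open set where $dn$ has rank two, extend by continuity, so the pair $(n, \tilde\gamma)$ lands in $\tilde S \subseteq S \times \mathbf{G}(2,N)$ and yields the desired lift $\tilde n$ with $\nu \circ \tilde n = n$.

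For the factorization through $b : \mathrm{Bl}_\mathcal{C} S \to S$, it suffices to show that the pullback ideal $n^{*}I_\mathcal{C} \cdot \mathcal{O}_{\C^2,0}$ is invertible, hence principal. From the quasi-ordinary form, the ambient linear form whose restriction to $S$ is the discriminant coordinate of $\pi$ lies in $I_\mathcal{C}$ and pulls back to $x^a$, giving $(x^a) \subseteq n^{*}I_\mathcal{C}$. The reverse inclusion is the main technical obstacle, requiring one to rule out an embedded primary component of $n^{*}I_\mathcal{C}$ at the origin. The plan is to exploit the equimultiplicity of $S$ along $\mathcal{C}$ from Lemma \ref{teissier}: at a point $c \in \mathcal{C} \setminus \{0\}$, the ideal $n^{*}I_\mathcal{C}$ is automatically principal (its support there is a smooth curve without nearby codimension-two points), so the local degree of a generic projection there equals the $x$-adic order of a generator; constancy of this local degree along $\mathcal{C}$, which is the content of equimultiplicity combined with the conservation-of-multiplicity argument of Step 6, then forbids an embedded component at the origin and forces $n^{*}I_\mathcal{C} = (x^a)$.
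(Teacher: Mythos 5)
Your construction of the bijective parametrization is sound and is a legitimate variant of the paper's: where you combine Step 4 of the proof of Theorem \ref{equisingularidad} with the quasi-ordinary normal form from Lemma \ref{connected} to get injectivity of $n$ over ${\mathcal C}$, the paper instead invokes topological triviality (a consequence of Whitney regularity) to get analytic irreducibility of $S$ at each point of ${\mathcal C}$. Either route works for that part.

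Both factorization arguments, however, have gaps, and they occur exactly where Whitney regularity has to enter. For the Nash modification: dividing the $2\times 2$ minors of the Jacobian by their gcd in the UFD $\mathcal{O}_{\C^2,0}$ only guarantees that the resulting Pl\"ucker coordinates have no common factor, i.e.\ that their common zero locus has codimension at least two; on a surface this still permits isolated common zeros, and a map to projective space (hence to the Grassmannian) defined off a finite set need not extend across it --- think of $(x:y)$ on $\C^2$. Such residual base points are precisely the points over which the Nash modification has a positive-dimensional fiber, and excluding them is a genuine consequence of Whitney regularity, which your lifting argument never uses; so ``simultaneously non-vanishing'' is unjustified and the lift is not established. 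For the blow-up, you correctly reduce to showing $n^{*}I_{\mathcal C}\cdot\mathcal{O}_{\C^2,0}=(x^a)$, but the reverse inclusion --- which you yourself flag as the main obstacle --- is only sketched; note that $(x^a)\subseteq n^{*}I_{\mathcal C}\subseteq(x^k)$ does not by itself rule out non-principal ideals such as $(x^a,\,x^{a-1}y^m)$, and the link between equimultiplicity of $S$ along ${\mathcal C}$ and the absence of a punctual component of $n^{*}I_{\mathcal C}$ is not made. The paper closes both gaps with one citation: by Teissier \cite[V. I.2]{BTeissier-rabida}, for a Whitney regular surface both the Nash modification and the blow-up of the singular locus are \emph{finite} (and birational) over $S$, so the universal property of the normalization factors $n$ through each of them at once. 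You should either import that finiteness statement or derive the absence of base points and the principality of $n^{*}I_{\mathcal C}$ directly from Whitney's conditions; as written, neither factorization is proved.
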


\begin{proof}
We have seen in \ref{smoothnormalization} that the normalization of such a surface is smooth. 
Since the surface $(S,0)$ is irreducible, the inverse image of the origin by the normalization map is a single point.  
So, up to an analytic isomorphism, the normalization induces a finite map $(\C^2, 0) \rightarrow (S,0)$, that is an isomorphism outside the singular locus.

Since the surface is topologically trivial along its singular locus, it is analytically irreducible at each point of the singular locus. So, the normalization induces a homeomorphism $(\C^2,0) \rightarrow (S,0)$. 

In \cite[V. I.2]{BTeissier-rabida} it is shown that for a Whitney regular surface, both Nash modification and the blow-up of the singular locus are finite maps. So, by universal property of the normalization, the normalization map factors through Nash modification, and through the blow-up of the singular locus.
\end{proof}

\begin{example}\label{whitneyunstable}
\end{example}
We give an example of an irreducible Whitney regular surface whose Nash modified surface and blow-up along the singular locus are no longer Whitney regular.

Consider the parametrized surface of Example \ref{whitneynosaturado}
$$\begin{array}{rcl}
n: \C^2 & \rightarrow & S \\
(a, t) & \mapsto & (a, t^4, at^6, t^7)\\
\end{array}$$
We have already seen that it is a Whitney regular surface.

The blow-up of the singular locus is given by the parametrization
$$\tau: (a, t) \mapsto (a, t^4, at^2, t^3).$$
Consider the sequence of points $p_n = \tau(\frac{1}{n}, 0)$ and $q_n = \tau(\frac{1}{n}, \frac{1}{n})$, with $n$ a positive integer.

The sequence of corresponding lines $(p_nq_n)$, converges to the line spanned by $(0, 0, 1, 1)$.
The tangent spaces $T_{q_n}S$ are spanned by the vectors $(1, 0, \frac{1}{n^2}, 0)$ and $(0, \frac{4}{n^3}, \frac{2}{n^2}, 
\frac{3}{n^2})$. They converge to the linear space spanned by $(1,0, 0, 0)$ and $(0, 0, 2, 3)$; it does not contain the vector
$(0, 0, 1, 1)$. 

The Nash modification with Plucker coordinates is given by the parametrization
$$\sigma: (a, t) \mapsto (a, t^4, \frac{3}{2} at^2, \frac{7}{4} t^3)$$
With the sequence of points $p_n = \sigma(\frac{1}{n}, 0)$ and $q_n = \sigma (\frac{1}{n}, \frac{1}{n})$, we obtain as limit of lines, the one spanned by $(0, 0, 6, 7)$;  and as limit of directions of tangent spaces the plane spanned by $(1, 0, 0, 0, 0)$ and
$(0, 0, 12, 21)$. The limit of lines is not contained in the limits of tangent spaces.

So, neither Nash modification nor the blow-up of the singular locus is Whitney regular.

\begin{remark}
Example \ref{whitneyunstable} shows that one can obtain a finite map from a surface to $\C^2$ whose discriminant locus is smooth, but the surface does not satisfy Zariski discriminant criterion. In fact, the original surface is Whitney regular and hence satisfies  Zariski's discriminant criterion. There exists then a finite generic projection $\pi : S \rightarrow \C^2$ whose discriminant locus is non-singular.  Composing $\pi$ with any of the modifications considered is still a finite map with the same discriminant locus. But it is not Whitney regular. So a non-generic finite map can have a smooth discriminant locus while the generic one has a singular discriminant locus.
\end{remark} 

\begin{example}\label{equisaturationunstable}
\end{example}

We give here a surface that is strongly equisingular, however, the surfaces obtained by Nash modification and the blow-up of the singular locus are no more strongly equisingular. 

Consider the surface $S$ given by the  parametrization:
$$n: (a, t) \mapsto (a, t^5, t^8, at^9).$$

It is not difficult to check that it is a Whitney regular surface. Furthermore, the plane curve parametrized by 
$$x= t^5, \, y= t^8 + a t^9$$ 
has a topological type that does not depend on the parameter $a$. So the surface is strongly equisingular along its singular locus. 

The parametrization/normalization  $n$ factors through the blow-up of the singular locus and through the Nash modification.

The blow-up of the singular locus is given by
$$\tau : (a, t) \mapsto (a, t^3, at^4, t^5)$$

and the Nash modification is given by
$$\sigma : (a,t) \mapsto (a, t^5, \frac{8}{5} t^3, \frac{9}{5} at^4)$$

Now consider $a$ as a parameter. The generic projection of a curve to $\C^2$ has the following parametrization:
$$x = t^3, \, y= at^4 + t^5$$

whose topological type changes depending on whether $a=0$ or $a\neq 0$.

So the modified surfaces are not strongly equisingular. 
 
 
\providecommand{\bysame}{\leavevmode\hbox to3em{\hrulefill}\thinspace}
\providecommand{\MR}{\relax\ifhmode\unskip\space\fi MR }
\providecommand{\MRhref}[2]{%
  \href{http://www.ams.org/mathscinet-getitem?mr=#1}{#2}
}
\providecommand{\href}[2]{#2}

\end{document}